%
%
%
%
%

%
\RequirePackage{fix-cm}
\documentclass[smallextended]{svjour3}       
\smartqed  
\usepackage{graphicx}
\usepackage{color}
\usepackage{amsmath,amssymb}
\usepackage{mathtools}
\usepackage{booktabs}
\usepackage{hyperref}

\newcommand{\Tind}{_{\mathrm{T}}}
\newcommand{\Rind}{_{\mathrm{R}}}
\DeclareMathOperator{\dtn}{\text{DtN}}
\newcommand{\diffq}[2]{\frac{\partial #1}{\partial #2}}

\newcommand{\colora}[1]{\textcolor{red}{#1}}
\newcommand{\colorb}[1]{\textcolor{blue}{#1}}

%
%
%
%
 \journalname{SN Partial Differential Equations and Applications}
\begin{document}

\title{Sweeping preconditioners for stratified media in the presence of reflections
}


\author{Janosch Preu\ss         \and
       Thorsten Hohage                \and 
       Christoph Lehrenfeld
}

\authorrunning{Preu\ss , Hohage,  Lehrenfeld} 

\institute{J. Preu\ss \at
              Max-Planck-Institut f\"ur Sonnensystemforschung, Justus-von-Liebig-Weg 3, 37077 G\"ottingen\\
              \email{preussj@mps.mpg.de}           
           \and
           T. Hohage \at
              Institut f\"ur Numerische und Angewandte Mathematik, Lotzestra{\ss}e 16-18, 37083 G\"ottingen and Max-Planck-Institut f\"ur Sonnensystemforschung, Justus-von-Liebig-Weg 3, 37077 G\"ottingen  \\ 
               \email{hohage@math.uni-goettingen.de}           
               \and       
            C. Lehrenfeld \at
              Institut f\"ur Numerische und Angewandte Mathematik, Lotzestra{\ss}e 16-18, 37083 G\"ottingen \\ 
               \email{lehrenfeld@math.uni-goettingen.de}           
}

\date{Received: date / Accepted: date}

\maketitle

\begin{abstract}
  In this paper we consider sweeping preconditioners for time harmonic wave propagation in stratified media, especially in the presence of reflections. In the most famous class of sweeping preconditioners Dirichlet-to-Neumann operators for half-space problems are approximated through absorbing boundary conditions. In the presence of reflections absorbing boundary conditions are not accurate resulting in an unsatisfactory performance of these sweeping preconditioners.
  We explore the potential of using more accurate Dirichlet-to-Neumann operators within the sweep. To this end, we make use of the separability of the equation for the background model.
  While this improves the accuracy of the Dirichlet-to-Neumann operator, we find both from numerical tests and analytical arguments that it is very sensitive to perturbations in the presence of reflections.
  This implies that even if accurate approximations to Dirichlet-to-Neumann operators can be devised for a stratified medium, sweeping preconditioners are limited to very small perturbations.
  
\keywords{Helmholtz equation \and Dirichlet-to-Neumann operator \and preconditioning \and  domain decomposition \and high-frequency waves \and computational seismology \and perfectly matched layers \and sweeping preconditioner}
 \subclass{ 65F08 \and  	65N30 \and 35L05 \and  86-08 \and 86A15  }
\end{abstract}

\section{Introduction}\label{section:intro}

Time harmonic wave equations arise in various applications.
Their numerical discretization leads to large linear systems which are difficult to solve with classical iterative methods \cite{EZ12}.
Recently, sweeping preconditioners have emerged as a promising approach to overcome this problem \cite{GZ19}. 
Since the introduction of the moving perfectly matched layer (PML) preconditioner by Engquist and Ying \cite{EY11}, numerous impressive results and further developments
of this technique have been published.
We refer to \cite{GZ19} for a comprehensive review. \\

Unfortunately, the range of wave propagation problems in which sweeping preconditioners can be used is limited. 
We are not aware of any publication in which sweeping preconditioners have been successfully applied to media that 
contain strong resonant cavities. 
In fact, numerical experiments indicate that the established sweeping methods are not suitable for treating such problems, see e.g.
section 7.4 of \cite{DZ16} or section 10 of \cite{GZ19}.
Additionally, sweeping preconditioners require an absorbing boundary condition on at least one boundary of the domain at which the 
process of sweeping can be started. 
Since these assumptions are violated in many practically relevant problems, e.g. from global seismology, it is important to explore if these limitations of the 
sweeping technique can be overcome. \\ 

This paper investigates this question for the case of stratified media. 
Our problem setting differs significantly from the case of quasiperiodic Helmholtz transmission problems as recently considered in \cite{NPT20},
for instance, because it allows for a complete reflection of waves at the domain boundaries.
As a concrete example we consider a problem from \cite{IW95,JTCI08} in which spherical coordinates $\{r,\varphi,\theta\}$ are used.
Assuming axisymmetry of all fields in $\varphi$ the propagation of shear waves (SH-waves) $u$ between the core mantle boundary (CMB) at $r = R_{\text{CMB}}$ and the surface of the Earth at $r = R_{\oplus}$ in the frequency domain is described by the equation
\begin{equation}\label{eq: SH-waves frequency domain}
\mathcal{L}u = f , \text{ for } (r,\theta) \in \Omega := ( R_{\text{CMB} },R_{\oplus}) \times (0,\pi),
\end{equation}
where 
\begin{equation}\label{eq: diffop}
 \mathcal{L}u \coloneqq  - \rho \omega^2 u r^2 \sin^2(\theta) - \frac{\sin^2(\theta) }{r^2} \frac{\partial}{\partial r} \left( r^4 \mu \frac{\partial u}{\partial r} \right) - \frac{1}{\sin(\theta)} \frac{\partial}{\partial \theta} \left(  \sin^3(\theta) \mu \frac{\partial u}{\partial \theta} \right) 
\end{equation}
and  $f = f^{\text{s}}  r^2 \sin^2(\theta)$,
subject to boundary conditions $\mathcal{B}u = 0$.
The boundary operator  is defined piecewise on $\partial \Omega = \Gamma_{D} \cup \Gamma_{N}$ with 
$\Gamma_{D} = \{ \theta = 0 \} \cup \{ \theta = \pi \} $ and $\Gamma_{N }  = \{  r = R_{\text{CMB}}   \} \cup  \{  r = R_{\oplus}   \}     $   by
\begin{equation}\label{eq:SH-waves bry cond}
u = 0     \text{ on } \Gamma_{D}, \quad \frac{\partial u}{\partial r} =  0  \text{ on } \Gamma_{N}.
\end{equation}
Here, $\rho$ is the mass density, $\mu = \rho v_{SH}^2$ is the shear modulus and $\omega$ the frequency. The background coefficients for $v_{SH}$ and $\rho$ are provided by the spherically symmetric PREM model \cite{DA81}. 
In the appendix, cf. Section \ref{sec:deriv}, we give a derivation of the variational formulation that we base the finite element discretization on.
\\

Conventional sweeping preconditioners cannot be applied to this problem since absorbing boundary conditions are missing.
In this paper an extension of the sweeping preconditioner is presented which overcomes this limitation for the spherically 
symmetric background model.
It will further be investigated to which extent this preconditioner can then also be used for other models in which the coefficients $\rho$ and $v_ {SH}$ are small perturbations from the spherically symmetric case. 
This would be realistic, since 3D tomographic models of the Earth deviate only a few percent from the background model \cite{BB02}. \\

The remainder of this paper is structured as follows. 
In Section \ref{section: framework} we recall the general framework of sweeping preconditioners and the commonly employed 
moving PML approximation of the Dirichlet-to-Neumann (DtN) operator.
As a prototype of an improved approximation of the DtN operator we construct a DtN operator based on the separability of the background problem on the discrete level in Section \ref{section: tensor product DtN}.
The potential of this method is explored in Section \ref{section: numexp} with numerical experiments. 
Here it is observed that the DtN for the free surface boundary condition is very sensitive to perturbations. 
This observation is discussed in detail in Section \ref{sec:DtN sensitivity}.
We draw some conclusions regarding the applicability of sweeping preconditioners in the presence of reflections in Section \ref{section: concl}.

\section{General framework of sweeping preconditioners}\label{section: framework}

In \cite{GZ19} several sweeping methods have been described in the framework of the \textit{double sweep optimized Schwarz method} (DOSM). 
Here, we adapt DOSM to our specific setting. 
This includes the restriction to special cases, e.g. only non-overlapping domain decompositions and DtN transmission conditions are considered. 
The simplified version is more appropriate for the purpose of this paper since it allows to focus attention on the approximation of the DtN map.

\subsection{Double sweep optimized Schwarz method}\label{ssection: DOSM}

\begin{figure*}
\centering
  \includegraphics[width=\textwidth]{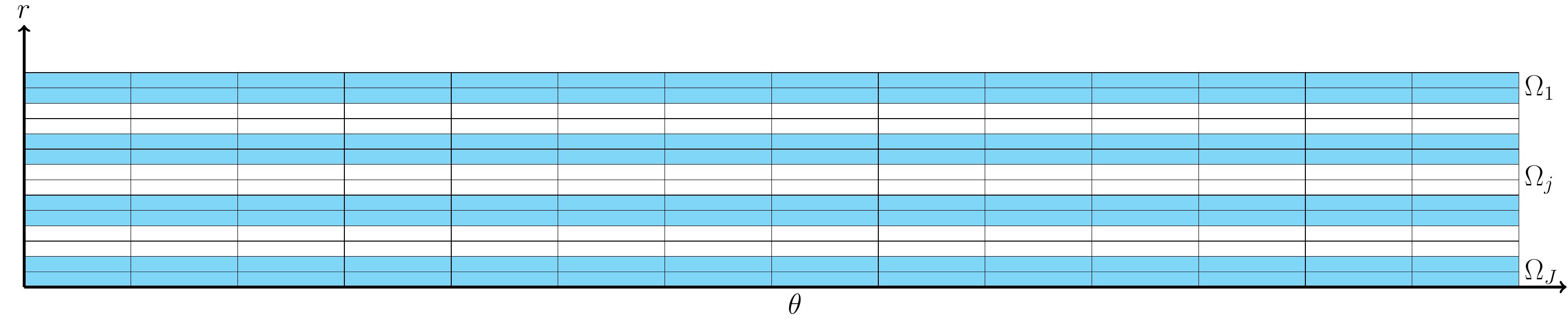}
\caption{Decomposition of the domain into layers.}
\label{fig:quad-mesh-tikz}      
\end{figure*}

Let $\Omega = \bigcup_{j=1}^{J} \Omega_j $ be a non-overlapping decomposition of the domain into horizontal layers $\Omega_j$, see Figure \ref{fig:quad-mesh-tikz}. 
To allow for an efficient solution of the subdomain problems the width of the layers in sweeping direction should be kept thin. 
Therefore, in the numerical examples below, we make the choice that one layer $\Omega_j$ contains only two finite elements in vertical direction ($r$) and $2 \cdot J$ elements in the horizontal direction ($\theta$) yielding a fixed aspect ratio for all finite elements. We denote by $\Gamma_{j,j \pm 1}   \coloneqq \partial \Omega_{j} \cap \partial \Omega_{j \pm 1} $ the interfaces between the layers and write $u_j$ for a function defined in layer $\Omega_j$. Based on these definitions we state the specialized sweeping algorithm. \\[1ex]
\emph{Forward sweep:} Given the last iterate $u_{j}^{(n-1)}$   in $\Omega_j, j = 1, \ldots,J$ solve successively for $j=1,\ldots,J-1$
\begin{align}
\mathcal{L}u_{j}^{(n-\frac{1}{2})}    &= f    && \text{in }  \Omega_j,  \nonumber \\  
\mathcal{B} u_{j}^{(n-\frac{1}{2})}  & = 0    && \text{on }  \partial \Omega \cap \partial \Omega_j,  \nonumber \\ 
r^4 \mu \frac{\partial u_{j}^{(n-\frac{1}{2})} }{\partial r} + \mathcal{P}_{j} u_{j}^{(n-\frac{1}{2})} & = r^4 \mu \frac{\partial u_{j-1}^{(n-\frac{1}{2})} }{\partial r} + \mathcal{P}_{j} u_{j-1}^{(n-\frac{1}{2})}      && \text{on }   \Gamma_{j,j - 1} \setminus \partial \Omega,              \nonumber  \\ 
  u_{j}^{(n-\frac{1}{2})} & = u_{j+1}^{(n-1)}   && \text{on }   \Gamma_{j,j + 1}.  \nonumber
\intertext{
                                                   \emph{Backward sweep:} Solve successively for $j = J, \ldots, 1$
}                                                   
\mathcal{L}u_{j}^{(n)}    &= f    && \text{in }  \Omega_j,  \nonumber \\  
\mathcal{B} u_{j}^{(n)}  & = 0    && \text{on }  \partial \Omega \cap \partial \Omega_j,  \nonumber \\ 
r^4 \mu \frac{\partial u_{j}^{(n)} }{\partial r} + \mathcal{P}_{j} u_{j}^{(n)} & = r^4 \mu \frac{\partial u_{j-1}^{(n-\frac{1}{2})} }{\partial r} + \mathcal{P}_{j} u_{j-1}^{(n-\frac{1}{2})}      && \text{on }   \Gamma_{j,j - 1},              \nonumber  \\ 
u_{j}^{(n)} & = u_{j+1}^{(n)}   && \text{on }   \Gamma_{j,j + 1}  \setminus \partial \Omega.  \nonumber
\end{align}
The transmission operator $\mathcal{P}_j$ is an (approximation) of the DtN map 
\begin{align}
\text{DtN}_j: & ~~g \mapsto - r^4 \mu \frac{\partial v}{\partial r},
\intertext{where $v$ solves}
\mathcal{L}v = 0 & ~~\text{ in } \Omega_{j}^{\text{ext}},   \nonumber \\ 
\mathcal{B}v = 0 & ~~\text{ on } \partial \Omega \cap \overline{\Omega_{j}^{\text{ext}} } ,   \\ 
v = g & ~~\text{ on } \Gamma_{j,j-1}, \nonumber
\end{align}
with $\Omega_{j}^{\text{ext}} = \bigcup_{i=1}^{j-1} \Omega_i$. 
If $\text{DtN}_{j}$ is well defined for $j=2,\ldots,J$ and the original problem and subdomain problems are uniquely solvable then DOSM  converges in one double sweep to the exact solution for $\mathcal{P}_j =\text{DtN}_{j}$  \cite{GZ19}. 
In practice, using the exact DtN map as a transmission operator is computationally too expensive. Therefore, $\mathcal{P}_j $ is chosen as an 
approximation of the DtN. The algorithm is then usually used as a preconditioner for GMRES with initial guess $u_{j}^{(0)} = 0, j = 1, \ldots,J$.

\subsection{Moving PML approximation of the DtN}\label{ssection: moving PML approx of DtN}

The problem for calculating the exact $\text{DtN}_j$ is posed on the whole exterior domain  $\Omega_{j}^{\text{ext}}$. Usually, it is assumed that on $\partial \Omega \cap \partial \Omega_1$ an absorbing boundary condition implemented by a PML is present. As the name ``moving PML'' suggests this PML is shifted closer to $\Omega_j$. This replaces the original problem posed on $\bigcup_{i=1}^{j-1} \Omega_i$ by a modified problem on the (usually smaller) domain $\Omega^{\text{PML}}_{j}$. In practice, the PML is usually started right at the coupling interface $\Gamma_{j,j-1}$ and $\Omega^{\text{PML}}_{j} = \Omega_{j-1}$. This leads to the operator 
\begin{align}
\text{DtN}_{j}^{\text{PML}}: & ~~g \mapsto - r^4 \tilde{\mu} \frac{\partial v}{\partial r},
\intertext{where $v$ solves}
\tilde{\mathcal{L}}v = 0 & ~~ \text{ in } \Omega_{j}^{\text{PML}}, \nonumber   \\ 
\tilde{\mathcal{B}}v = 0 & ~~ \text{ on } \partial \Omega_{j}^{\text{PML}} \setminus \partial \Omega_j,   \label{eq:L moving PML} \\ 
v = g & ~~ \text{ on } \Gamma_{j,j-1}. \nonumber
\end{align}
Here, the original differential operators $\mathcal{L}$, $\mathcal{B}$ have been replaced by modified versions $\tilde{\mathcal{L}}$, $\tilde{\mathcal{B}}$ due to the complex scaling applied 
in the PML region.

\section{A tensor product approximation of the DtN}\label{section: tensor product DtN}

In this section a new approach to approximate the DtN for tensor product discretizations will be presented. Let us note that the purpose of this approach is primarily to explore the potential of sweeping preconditioners under the assumption of accurate DtN approximations in what follows.

Let us fix an interface $\Gamma_{j,j-1}$ at $r = R_j$ on which the $\text{DtN}_{j}$ shall be computed. 
Let $W_{h}^{R_j}  \subset H^{1}\left( (R_j,R_{\oplus}) \times (0, \pi) \right)$ be the finite element space in  $\Omega_{j}^{\text{ext}}$. 
A tensor product discretization is assumed. 
Hence, $W_{h}^{R_j}  = V_{h}^{R_j} \otimes V_{h}^{\theta}$, where $ V_{h}^{R_j}  \subset H^{1}\left( (R_j,R_{\oplus})\right)$ and $V_{h}^{\theta} \subset H^{1}_{0}\left( (0,\pi) \right)$ are one-dimensional finite element spaces. 
In particular, $w_h \in W_{h}^{R_j} $ is a linear combination of terms of the form $R_{h}(r) \vartheta_{h}(\theta)$ with $R_h \in V_{h}^{R_j} $ and $ \vartheta_h \in V_{h}^{\theta}$.\\
Given Dirichlet data $g_h \in V_{h}^{\theta}$ the DtN in the finite element setting is computed as follows. 
First find $w_h \in W_{h}^{R_j}$ with $w_h(r=R_j,\cdot) = g_h$ such that 
\begin{align}\label{eq:dtn_2D}
a_{R_j}(w_h,v_h) & \coloneqq  \int\limits_{R_{j}}^{R_{\oplus}} \!\!  \int\limits_{0}^{\pi} \!\! \left( \!   - \rho r^2 \omega^2  w_h v_h \!+\!  r^2 \mu \frac{\partial w_h}{\partial r}  \frac{\partial v_h}{\partial r} \!+\!   \mu \frac{\partial w_h}{\partial \theta} \frac{\partial v_h}{\partial \theta}   \! \right) \!r^2 \sin^3(\theta)  d \theta dr  \nonumber \\ 
 & = 0 \text{ for all  } v_h \in  \{ v_h \in W_{h}^{R_j} \mid v_h(r=R_j,\cdot) = 0 \}. 
\end{align} 
Then $\text{DtN}_{j}(g_h) = -(R_{j})^4 \mu(R_j) \partial_r w_h(R_j,\cdot)$.  \\
This problem will be solved in two steps.
 In section \ref{ssection: tensor product DtN on disc-eigenfct} we treat the case in which $g_h$ is a discrete eigenfunction of the weighted Laplacian on $\Gamma_{j,j-1}$. 
 The action of the DtN on such data is given by multiplication with a number which can be computed by solving an ODE. 
 In other words, the DtN is diagonal in the basis of the discrete eigenfunctions. 
 This allows to treat the case of general Dirichlet data in section \ref{ssection: tensor product DtN on general data} by a simple change of basis.

\subsection{The DtN applied to a discrete eigenfunction}\label{ssection: tensor product DtN on disc-eigenfct}

Let $\psi^{\ell}(\theta) \in V_{h}^{\theta}$ for $\ell = 1, \ldots, L$ denote the discrete eigenfunctions of the discretized Laplacian on $\Gamma_{j,j-1}$ with eigenvalue $\lambda^{\ell}$, i.e. 
\begin{equation}\label{eq: discrete eigenfunctions}
\int\limits_{0}^{\pi} \frac{\partial \psi^{\ell }}{\partial \theta}  \frac{\partial \vartheta_h}{\partial \theta} \sin^3(\theta) ~d \theta  = \lambda^{\ell}
\int\limits_{0}^{\pi}  \psi^{\ell }  \vartheta_h  \sin^3(\theta) ~d \theta 
\end{equation}
for all $\vartheta_h \in V_{h}^{\theta}$. 

The next proposition shows that the $\psi^{\ell}$ are also eigenfunctions of the discretized DtN map.

\begin{proposition}\label{prop: DtN on discrete eigenfunction}
Let $u^{\ell}_{h} \in V_{h}^{R_j} $ with $u^{\ell}_{h}(R_j) = 1$ be the solution to 

\begin{equation}\label{eq:ODE for discrete modes}
\int\limits_{R_j}^{R_{\oplus}} \left( -\rho r^2 \omega^2 u^{\ell}_{h} R_h + r^2 \mu \frac{\partial u^{\ell}_{h}}{\partial r}  \frac{\partial R_{h}}{\partial r} + \lambda_{\ell} \mu u^{\ell}_{h} R_h \right) r^2 ~d r  = 0. 
\end{equation} 
for all $R_h \in  \{v_h \in V_{h}^{R_j} \mid v_h(R_j) = 0 \}$.

Then $DtN_{j}(\psi^{\ell}) = -(R_j)^4 \mu(R_j) \partial_r u^{\ell}_{h} (R_j) \psi^{\ell}$.
\end{proposition}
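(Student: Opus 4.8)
The plan is to verify directly that the separable ansatz $w_h(r,\theta) = u_h^\ell(r)\,\psi^\ell(\theta)$ solves the two-dimensional discrete problem \eqref{eq:dtn_2D} for Dirichlet data $g_h = \psi^\ell$, and then to read off the normal derivative. First I would check the boundary condition: since $u_h^\ell(R_j)=1$ by assumption, $w_h(R_j,\theta) = u_h^\ell(R_j)\psi^\ell(\theta) = \psi^\ell(\theta) = g_h$, so the ansatz lies in the correct affine solution set. Next I would exploit the tensor product structure of the test space. Because $W_h^{R_j} = V_h^{R_j}\otimes V_h^\theta$ and the constraint $v_h(R_j,\cdot)=0$ only annihilates the coefficients attached to the boundary node in the $r$-direction, the test space factorizes as $V_h^{R_j,0}\otimes V_h^\theta$, where $V_h^{R_j,0} := \{R_h\in V_h^{R_j} : R_h(R_j)=0\}$. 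Hence it suffices to verify $a_{R_j}(w_h,v_h)=0$ for product test functions $v_h = R_h(r)\vartheta_h(\theta)$ with $R_h\in V_h^{R_j,0}$ and $\vartheta_h\in V_h^\theta$, the general case following by linearity.

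Then I would substitute $w_h = u_h^\ell\psi^\ell$ and $v_h = R_h\vartheta_h$ into the three terms of $a_{R_j}$ and use that, for the separable background, $\rho$ and $\mu$ depend on $r$ alone, so that each integrand factorizes into an $r$-integral times a $\theta$-integral. The mass term and the $r$-derivative term both carry the common factor $\int_0^\pi \psi^\ell\vartheta_h\sin^3(\theta)\,d\theta$, while the $\theta$-derivative term produces $\int_0^\pi (\psi^\ell)'\vartheta_h'\sin^3(\theta)\,d\theta$. The crucial step is to rewrite this last factor by means of the discrete eigenvalue relation \eqref{eq: discrete eigenfunctions}, replacing it by $\lambda^\ell\int_0^\pi\psi^\ell\vartheta_h\sin^3(\theta)\,d\theta$. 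After this substitution the common $\theta$-factor can be pulled out of all three terms, giving
\[
a_{R_j}(w_h,v_h) = \left(\int_0^\pi \psi^\ell\vartheta_h\sin^3(\theta)\,d\theta\right)\left[\int_{R_j}^{R_\oplus}\Big(-\rho r^2\omega^2 u_h^\ell R_h + r^2\mu (u_h^\ell)'R_h' + \lambda^\ell\mu u_h^\ell R_h\Big)r^2\,dr\right].
\]
I would then recognize the bracketed $r$-integral as precisely the left-hand side of \eqref{eq:ODE for discrete modes}, which vanishes for every $R_h\in V_h^{R_j,0}$ by the defining property of $u_h^\ell$. Consequently $a_{R_j}(w_h,v_h)=0$ for all admissible product test functions, and hence for all test functions.

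Finally, invoking the unique solvability of the discrete problem (assumed throughout, cf.\ the discussion following the definition of the exact $\text{DtN}_j$), the separable $w_h$ is \emph{the} solution for $g_h=\psi^\ell$. The normal derivative then inherits the product form, $\partial_r w_h(R_j,\cdot) = (u_h^\ell)'(R_j)\,\psi^\ell$, so that $\text{DtN}_j(\psi^\ell) = -(R_j)^4\mu(R_j)(u_h^\ell)'(R_j)\,\psi^\ell$ as claimed; in particular $\psi^\ell$ is an eigenfunction of the discrete DtN. The computation is routine once the ansatz is made; the genuine points to get right are the factorization of the constrained test space into a tensor product, which is what makes testing against products sufficient, and the clean use of \eqref{eq: discrete eigenfunctions} to eliminate the $\theta$-derivatives. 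This latter step is where separability of the coefficients $\rho$ and $\mu$ is essential, and it is the part I would treat most carefully.
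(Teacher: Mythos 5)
Your proposal is correct and follows essentially the same route as the paper's proof: insert the separable ansatz $w_h = u_h^{\ell}\psi^{\ell}$, test against products $R_h\vartheta_h$, use the discrete eigenvalue relation \eqref{eq: discrete eigenfunctions} to eliminate the $\theta$-derivative term, and identify the remaining $r$-integral with \eqref{eq:ODE for discrete modes}. You merely make explicit two points the paper leaves implicit -- the factorization of the constrained test space as $V_h^{R_j,0}\otimes V_h^{\theta}$ and the appeal to unique solvability of the discrete exterior problem -- which is a welcome tightening, not a different argument.
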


\begin{proof}
Define $w_h(r,\theta) = u^{\ell}_{h}(r) \psi^{\ell}(\theta) \in W_{h}^{R_j}$.
Note that $w_h(R_j,\theta) = \psi^{\ell}(\theta)$. So, if 
we can show that  $a_{R_j}(w_h,v_h) = 0$  for all $v_h \in  \{ v_h \in W_{h}^{R_j} \mid v_h(r=R_j,\cdot) = 0 \}$ then  $DtN_{j}(\psi^{\ell}) =   -(R_j)^4 \mu(R_j)  \partial_r w_h(R_j,\cdot)= -(R_j)^4 \mu(R_j) \partial_r u^{\ell}_{h}(R_j) \psi^{\ell}$ follows. 
Using (\ref{eq: discrete eigenfunctions}) for $v_h(r,\theta) = R_h(r) \vartheta_h(\theta)$ yields: 

\begin{align}
& a_{R_j}(w_h,v_h) = \int\limits_{R_{j}}^{R_{\oplus}} \left( -\rho r^2 \omega^2 u^{\ell}_{h} R_h + r^2 \mu \frac{\partial u^{\ell}_{h}}{\partial r}  \frac{\partial R_{h}}{\partial r} \right) r^2 ~d r   \int\limits_{0}^{\pi}   \psi^{\ell }  \vartheta_h  \sin^3(\theta) ~d \theta  \nonumber \\
 &+ \int\limits_{R_{j}}^{R_{\oplus}}  \mu u^{\ell}_{h} R_h  r^2 ~d r \int\limits_{0}^{\pi} \frac{\partial \psi^{\ell }}{\partial \theta}  \frac{\partial \vartheta_h}{\partial \theta} \sin^3(\theta) ~d \theta \nonumber \\
 & =  \int\limits_{R_{j}}^{R_{\oplus}} \left( -\rho r^2 \omega^2 u^{\ell}_{h} R_h + r^2 \mu \frac{\partial u^{\ell}_{h}}{\partial r}  \frac{\partial R_{h}}{\partial r} + \lambda_{\ell} \mu u^{\ell}_{h} R_h \right)  r^2 ~d r  \int\limits_{0}^{\pi}   \psi^{\ell }  \vartheta_h  \sin^3(\theta) ~d \theta \nonumber \\
 & = 0,
\end{align}

since $u^{\ell}_{h}$ solves (\ref{eq:ODE for discrete modes}).

\end{proof}

\begin{remark}
The proof of Proposition \ref{prop: DtN on discrete eigenfunction} uses as an essential ingredient that the product of the ODE solution and the discrete eigenfunction $u^{\ell}_{h}(r) \psi^{\ell}(\theta) $
is contained in the two dimensional finite element space $W_{h}^{R_j}$ employed for the solution of problem (\ref{eq:dtn_2D}).
This is ensured by letting the finite element space $V_{h}^{R_j}$ for the ODE (\ref{eq:ODE for discrete modes}) coincide with the first factor of the tensor product $W_{h}^{R_j}  = V_{h}^{R_j} \otimes V_{h}^{\theta}$. 
In other words, the ODE discretization is chosen as the restriction of the two dimensional discretization to the radial direction. The consequences of violating this requirement are 
discussed in Remark \ref{remark:numerical_perturb}.
\end{remark}

\subsection{General Dirichlet data }\label{ssection: tensor product DtN on general data}

To apply the DtN to general Dirichlet data $g_h \in V_{h}^{\theta}$ we express it in the eigenbasis $g_h = \sum\limits_{\ell=1}^{L}{ g_{\ell} \psi^{\ell}}$ and apply the DtN as 
\begin{equation}
\text{DtN}_j(g_h) = \sum\limits_{\ell=1}^{L}{ g_{\ell} \text{DtN}_j(\psi^{\ell})} 
                                      = - \sum\limits_{\ell=1}^{L}{  g_{\ell} (R_j)^4 \mu(R_j) \partial_r u^{\ell}_{h} (R_j) \psi^{\ell}   }.
\end{equation} 
Then we transform back to the finite element basis. 
The transformation to the discrete eigenbasis involves the solution of a dense linear system which is composed of the eigenvectors in the finite element basis.
%
%
In case of a uniform 2D mesh with periodic boundary conditions, the boundary mass and stiffness matrices are discrete block circulant matrices, and the transformation is given by the tensor (Kronecker) product of a small matrix and an FFT matrix, cf. \cite{rjasanow1994effective}. Possibly the inversion of dense matrices can also be avoided for more general grids in some cases by designing analogues of infinite elements adapted to this setting, but the results of this paper may discourage from starting this effort.


\section{Numerical experiments}\label{section: numexp}

In this section numerical experiments for the model problem will be presented. 
First, the shortcomings of the moving PML approximation of the DtN are demonstrated.
Then we apply our new approximation to the spherically symmetric model and investigate its performance in case of small perturbations. \par 

All experiments are carried out using $H^1$-conforming finite elements and have been implemented in the finite element library \texttt{Netgen/NGSolve}, see \cite{JS97,JS14}.
Since the experiments feature piecewise smooth coefficients we have decided to use a medium finite element order of four.
This allows to benefit from the efficiency of higher order elements in the subdomains where the coefficients are smooth.
The discontinuities, which do not align with layer interfaces for the shear wave example, should be resolved through mesh refinement, i.e. by increasing the number of layers.
Scripts for reproducing the numerical results are provided at DOI: \href{https://doi.org/10.5281/zenodo.3886458}{10.5281/zenodo.3886458}.
This archive includes a \texttt{README} file which describes the structure of the code and gives detailed instructions on how to reproduce the presented results.

\subsection{Moving PML}\label{ssection: numexp moving PML}

The moving PML preconditioner relies on the assumption that the computational domain is truncated by an absorbing boundary condition in 
at least one direction and that the medium is free of large resonant cavities. 
The following two examples demonstrate that these assumptions are crucial.

\subsubsection{Academic example}\label{sssection:numexp mPML academic}

Consider the Helmholtz equation on the unit disk with discontinuous coefficients.
The bilinear form in polar coordinates is given by  
\begin{align}
a(u,v) = \int\limits_{0}^{1}  \int\limits_{0}^{2 \pi} \left(    - \frac{r \omega^2}{\rho c^2}  u v +  \frac{r}{\rho}  \frac{\partial u}{\partial r}  \frac{\partial v}{\partial r} +   \frac{1}{r \rho}\frac{\partial u}{\partial \theta} \frac{\partial v}{\partial \theta}    \right)   d \theta dr. \nonumber 
\end{align}
Periodic boundary conditions in $\theta$ are used. 
At $r =1.0$ an absorbing boundary condition implemented by a PML is set, while at $r= 0$ we
simply use natural boundary conditions. 
The geometrical setup is as shown in Figure \ref{fig:quad-mesh-tikz}. 
Similar to the experiments from \cite{GZ19} we let the wavespeed vary discontinuously between the layers where the strength of the discontinuity is given by a factor $\alpha$. 
To this end, we set on the first layer $c =1/ (1+\alpha / 2)$, on the next $c =1/ (1-\alpha / 2)$ then again $c =1/ (1+\alpha / 2)$ continuing in this fashion. 
The density is simply $\rho = 1$. \\

We apply the DOSM with moving PML approximation of the DtN to this problem. 
The number of subdomains is chosen to grow linearly with the wavenumber in order to counter the pollution effect \cite{BS97}.
The results are shown in Table \ref{tab:Moving PML-academic}. We also consider the case in which additional damping is added to the preconditioner\footnote{The original problem to which we apply the preconditioner is still the one without damping.}, i.e. the operator $\tilde{\mathcal{L}}$ in equation (\ref{eq:L moving PML}) describes a Helmholtz problem with complex frequency $ \omega + i \gamma$, where $\gamma = 1$. Adding additional damping to the preconditioner leads to nearly robust iteration numbers for $\alpha = 0$. However,  for both versions the iteration numbers increase drastically as the contrast $\alpha$ is increased. 
In further experiments, various choices for the damping parameter $\gamma$ have been considered.
However, a significant improvement in the case of high contrast could not be achieved.
 As a result, the preconditioner becomes completely inefficient in this setting. \\ 

\begin{table}
\caption{GMRES iteration numbers \colora{x} / \colorb{y} for moving PML approximation of the DtN for the academic example. For case \colorb{y} the preconditioner is constructed based on a damped problem with wavenumber $\omega \mapsto \omega + i \gamma$ for $\gamma = 1$. A random source which is set to zero in the PML layer is used as the right hand side. The dash means that the desired tolerance was not achieved within 200 iterations.}
\label{tab:Moving PML-academic}
\begin{center}
\begin{tabular}{cc@{\hspace*{0.6cm}}ccccc}
  \toprule
$\omega$ & $J$ & $\alpha =0$ & $\alpha=1/4$  & $\alpha = 1/2$  & $\alpha = 3/4$  & $\alpha = 1$     \\
  \midrule
8 & 3  &  \colora{8} / \colorb{ 9} &  \colora{9} /  \colorb{10} & \colora{9} /  \colorb{11 }& \colora{8} /  \colorb{10 } & \colora{8} /   \colorb{10 }   \\
16 & 6  &  \colora{13} /  \colorb{11 } &  \colora{17} /  \colorb{17} & \colora{30} /  \colorb{32 } & \colora{39} /  \colorb{40 } & \colora{46} / \colorb{45 } \\
32 &  12 & \colora{15} /  \colorb{12 } & \colora{46} /  \colorb{37} & \colora{88} /  \colorb{93 }  & \colora{154} /  \colorb{162 } & \colora{-} /   \colorb{- } \\
64 & 24 &  \colora{19} /  \colorb{13 } & \colora{189} /  \colorb{185} & \colora{-} /  \colorb{ -} &  \colora{-}  /  \colorb{- } & \colora{-} /  \colorb{ -} \\
\bottomrule
\end{tabular}
\end{center}
\end{table}

\subsubsection{SH-waves in frequency domain}\label{sssection:numexp mPML SH-wave}

In the academic example reflections generated by the strongly discontinuous coefficients lead to the method's breakdown.
Severe reflections can also be generated by boundary conditions as is the case for the model problem (\ref{eq: SH-waves frequency domain}) from seismology.
This can be demonstrated by comparing the GMRES iteration numbers for two different boundary conditions:
\begin{itemize}
\item In the first case, a PML is implemented at the Earth's surface.
\item In the second case, the PML is removed, which realizes the  physically desired free surface condition. 
\end{itemize}
The tolerance is set to $10^{-7}$. Two sources are considered: a Dirac and a random source. \\ 

The iteration numbers  for the case of a PML at the Earth's surface are shown in Table \ref{tab:Moving PML} in the columns `PML' . 
The wave field for $\omega = 2048$ with the Dirac source is shown in Figure \ref{fig:pt-source-omega2048-comp} (a). 
The waves generated from the point source bounce off from the CMB. Additional reflections are caused by the discontinuous coefficients and the Dirichlet boundary conditions at $\theta = 0$ and $\theta = \pi$.
Despite these difficulties, the preconditioner performs well. 
The iteration numbers grow only very mildly and the low tolerance of $10^{-7}$ is achieved for $\omega = 2048$ in 15 iterations for both sources. \\ 

The situation changes drastically when the PML at the Earth's surface is removed. 
The solver now has to capture a very complex wavefield created by additional reflections from the Earth's surface as Figure \ref{fig:pt-source-omega2048-comp} (b) shows.
The results for this case are given in Table \ref{tab:Moving PML} in the columns `free'.
The iteration numbers now grow drastically with increasing wavenumber. 
In Figure \ref{fig:res-l2err-movingPML-pt.} it can be seen that the residual stagnates for many iterations.
GMRES first has to filter out the reflections until a convergence to the solution occurs.
This renders the moving PML preconditioner unsuitable for our purpose.
Let us note that in additional computational studies we also tried adding damping to the preconditioning problem which however did not improve the situation.

\begin{figure*}

\centering
  \begin{tabular}{@{}c@{}}
     \includegraphics[width=\textwidth]{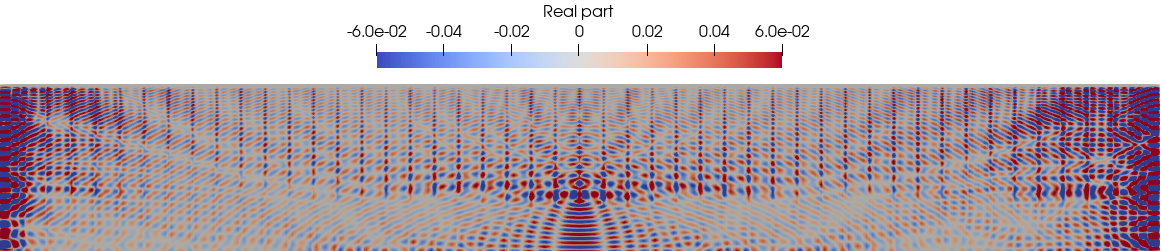} \\[\abovecaptionskip]
    \small (a) with PML.
  \end{tabular}

  \vspace{\floatsep}

  \begin{tabular}{@{}c@{}}
      \includegraphics[width=\textwidth]{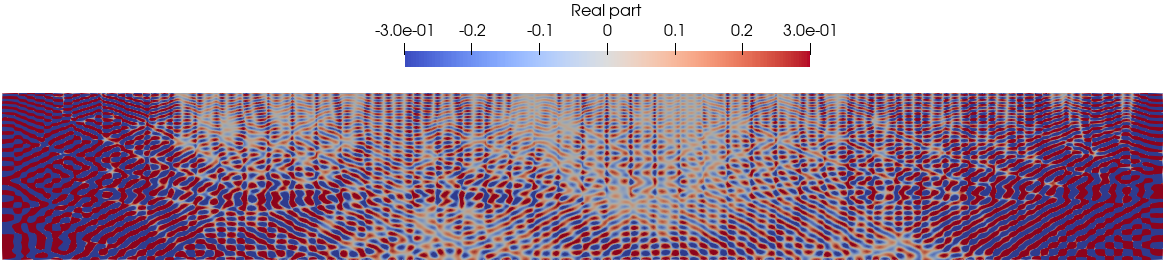} \\[\abovecaptionskip]
    \small (b) free surface.
  \end{tabular}
  
 \caption{ The real part of the solution for $\omega = 2048$ with a Dirac source. A PML was applied at the Earth's surface for (a) while (b) uses a free surface condition. }
\label{fig:pt-source-omega2048-comp}      
\end{figure*}

\begin{figure}
  \includegraphics[scale = 0.5]{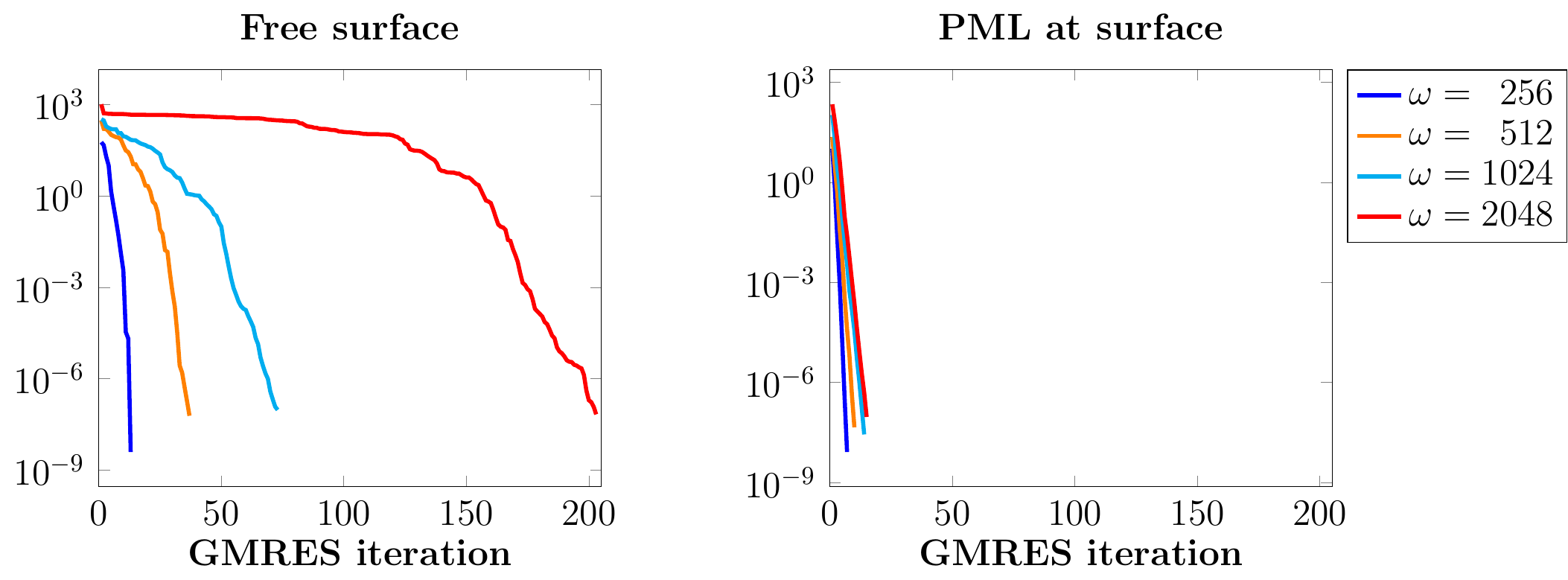}
\caption{GMRES residuals for moving PML approximation of the DtN with a Dirac source. On the left a free surface boundary condition is used at the Earth's surface while on the right a PML is implemented.}
\label{fig:res-l2err-movingPML-pt.}       
\end{figure}

\begin{table}
\caption{Influence of the boundary condition on GMRES iteration numbers for moving PML approximation of the DtN. The case denoted by 'PML' uses a radiation condition at the Earth's surface implemented by a PML while 'free' sets the free surface boundary condition. Two different sources are considered: a Dirac source (left columns) and a random source which is set to zero in the PML layer (right columns). }
\label{tab:Moving PML}
\begin{center}
\begin{tabular}{cccccc}
  \toprule
  && \multicolumn{2}{c}{Dirac source}& \multicolumn{2}{c}{random source}\\
$\omega$ & $J$ &  \# iter (PML)  &  \# iter (free)  &  \# iter (PML)  &  \# iter (free)   \\
\midrule
256 & 3 & 7  & 13 & 7   & 18  \\
512  & 6 &  10 & 37 &  9 & 53  \\
1024 & 12 & 14  & 73  & 13  & 119 \\
  2048 & 24 & 15  & 203  & 15  & 307 \\
  \bottomrule
\end{tabular}
\end{center}
\end{table}

\subsection{Tensor product DtN  }\label{ssection: numexp tensor product}

We tackle problem (\ref{eq: SH-waves frequency domain}) with the free surface boundary condition using a tensor product discretization of the DtN as described in Section \ref{section: tensor product DtN}.
The $L^2$-error after one application of the preconditioner with respect to a solution computed with a direct solver is shown 
in Table \ref{tab:l2error tensor product}.
Apparently, the preconditioner can be used as a direct solver for the considered cases.
The growth of the error as the number of subdomains $J$ increases stems from a loss of precision in the eigenvalue equation \ref{eq: discrete eigenfunctions}, 
which occurs because we currently compute all eigenpairs explicitly. 
A practical implementation of our method would try to avoid such explicit computations by the techniques mentioned in Section \ref{ssection: tensor product DtN on general data}.
Analogous results are obtained for the academic example from Section \ref{sssection:numexp mPML academic}.
This confirms the theoretical result derived in Proposition \ref{prop: DtN on discrete eigenfunction}. 

\begin{table}
\caption{Relative $L^2$-error after one application of DOSM with tensor product DtN transmission conditions for a tensor product discretization.  Two different sources are considered: a Dirac source (left column) and a random source (right column). }
\label{tab:l2error tensor product}
\begin{center}
\begin{tabular}{cccc}
  \toprule
  $\omega$  &$J$ & error (Dirac) & error (random)  \\
  \midrule
256 &       3  &   $9.76 \cdot 10^{-13}$    &    $7.70 \cdot 10^{-13}$     \\
512 &        6 & $3.76 \cdot 10^{-11}$       &  $6.94 \cdot 10^{-09}$          \\
1024 &   12 &   $4.46 \cdot 10^{-10}$     &      $9.36\cdot 10^{-10}$      \\
  2048 &  24 & $3.42 \cdot 10^{-08}$      &  $5.38 \cdot 10^{-07}$      \\
  \bottomrule
\end{tabular}
\end{center}
\end{table}

\subsubsection{Sound speed perturbations}\label{sssection:tensor product perturb}

So far only the radially symmetric shear velocity $v_{\text{PREM}}(r)$ of PREM has been considered.
Let us now introduce a velocity perturbation 
\begin{equation}\label{eq:perturb}
v_{\text{pert}}(r,\theta) = \cos(r \theta) \sin(r \theta).
\end{equation}
Setting $v(r,\theta) = v_{\text{PREM}}(r) \left( 1 + \varepsilon v_{\text{pert}}(r,\theta)  \right)$ results in a relative perturbation 
of strength $\varepsilon$. 
In the following, the tensor product DtN based on $v_{\text{PREM}}$ is employed to precondition the system for $v(r,\theta)$. \\ 

In Table \ref{tab:tensor product DtN sound speed perturbation} the iteration numbers obtained with the tensor product DtN are 
compared to the moving PML\footnote{The results for the moving PML with $\varepsilon = 0$ are slightly different compared to Table \ref{tab:Moving PML} because the random source is not set to zero in the uppermost layer.}  based approximation. 
The moving PML yields iteration numbers which are essentially independent of the strength of the perturbation since it approximates the DtN
based on the perturbed shear velocity. 
The tensor product DtN instead is obtained from the separable background velocity. 
Thus, iteration numbers grow with the strength of the perturbation. \par

The performance of both approaches is highly dependent on the boundary conditions imposed at the Earth's surface. 
The tensor product DtN is very robust against perturbations for the PML boundary condition. 
As a result, it outperforms the moving PML method for perturbations up to $2 \%$.
When the PML is replaced by a free surface boundary condition then the DtN apparently becomes very sensitive to perturbations.
Hence, the tensor product DtN only yields acceptable iteration numbers in the high frequency regime for perturbations smaller than $0.1 \%$.
\begin{remark}\label{remark:numerical_perturb}
  Even in cases where the model is perfectly captured in the solution of the exterior problem in terms of the data, i.e. assuming no perturbations in the model coefficients, one often needs to deal with numerical perturbations. For instance, we considered problems without data perturbation where, however, adaptive meshes that violate the tensor product structure or for the solution of \eqref{eq:ODE for discrete modes} different ODE solvers have been used. These numerical perturbations resulted in the same effect, a dramatic amplification thereof, rendering the approach practically useless in the presence of reflections.
\end{remark}

The tensor product DtN approximation considered above realizes an exact solution of the exterior problem in the case of no perturbation of the background model. The missing robustness of this approach is not due to the tensor product construction, but rather due to the high sensitivity of the exact DtN operator itself. The investigation of this effect will be the subject of the subsequent section. 

\begin{table}
\caption{Comparison of GMRES iteration numbers \colora{x} / \colorb{y}: The numbers \colora{x} are obtained when the tensor product DtN based on the background model is used to set up a preconditioner for the system 
with a perturbed shear velocity. For the case  \colorb{y} the moving PML approximation of the DtN is employed.  The strength of the perturbation $\varepsilon$ is given in $\%$. A random source is used for the right hand side (set to zero where PML is present). The dash ' -'  means that the desired tolerance of $10^{-7}$ was not reached after 1000 iterations. }
\label{tab:tensor product DtN sound speed perturbation}      
\center{Free surface condition at Earth's surface} \\[0.5ex]
\begin{small}
\begin{tabular}{rrr@{/}rr@{/}rr@{/}rr@{/}rr@{/}rr@{/}rr@{/}r}
  \toprule
  $\omega$  &$J$& \multicolumn{2}{c}{$0 \%$} &
                  \multicolumn{2}{c}{$0.0625 \%$} &
                  \multicolumn{2}{c}{$0.125 \%$} &
                  \multicolumn{2}{c}{$0.25 \%$} &
                  \multicolumn{2}{c}{$0.5 \%$} & 
                  \multicolumn{2}{c}{$1 \%$}  &
                  \multicolumn{2}{c}{ $2 \%$}  \\
  \midrule
256 &       3  & \colora{1}&\colorb{~18}   & \colora{4}&\colorb{~18}   & \colora{4}&\colorb{~18} &  \colora{6}&\colorb{~18}    & \colora{7}&\colorb{~18} & \colora{8}&\colorb{~18} & \colora{11}&\colorb{~18} \\
512 &        6 &  \colora{1}&\colorb{~53}   & \colora{9}&\colorb{~53}     &  \colora{12}&\colorb{~53} & \colora{17}&\colorb{~53}    & \colora{26}&\colorb{~53} &  \colora{39}&\colorb{~53}  &  \colora{62}&\colorb{~53}  \\
1024 &   12 & \colora{2}&\colorb{119}   & \colora{16}&\colorb{120}    & \colora{23}&\colorb{120}    &   \colora{35}&\colorb{122}    &  \colora{53}&\colorb{122} &    \colora{84}&\colorb{118}    &  \colora{158}&\colorb{123}   \\
  2048 &  24 &  \colora{2}&\colorb{292}   & \colora{38}&\colorb{292}   & \colora{102}&\colorb{293}  &  \colora{215}&\colorb{ 274} & \colora{422}&\colorb{291}  &  \colora{735}&\colorb{238} & \colora{-}&\colorb{ 274} \\
  \bottomrule
\end{tabular}
\end{small}
\center{PML at Earth's surface} \\[0.5ex]
\begin{small}
\begin{tabular}{rrr@{/}rr@{/}rr@{/}rr@{/}rr@{/}rr@{/}rr@{/}rr@{/}r}
  \toprule
  $\omega$  &$J$& \multicolumn{2}{c}{$0 \%$} &
                  \multicolumn{2}{c}{$0.0625 \%$} &
                  \multicolumn{2}{c}{$0.125 \%$} &
                  \multicolumn{2}{c}{$0.25 \%$} &
                  \multicolumn{2}{c}{$0.5 \%$} & 
                  \multicolumn{2}{c}{$1 \%$}  &
                  \multicolumn{2}{c}{ $2 \%$}  &
                  \multicolumn{2}{c}{ $4 \%$}  \\
  \midrule
256 &       3   & \colora{1}&\colorb{7}   & \colora{3}&\colorb{7}   & \colora{3}&\colorb{7} &  \colora{3}&\colorb{7}    & \colora{3}&\colorb{7} & \colora{4}&\colorb{7} & \colora{4}&\colorb{7}  & \colora{5}&\colorb{7}        \\
512 &        6   & \colora{1}&\colorb{9}   & \colora{3}&\colorb{9}   & \colora{4}&\colorb{9} &  \colora{4}&\colorb{9}    & \colora{4}&\colorb{9} & \colora{5}&\colorb{9} & \colora{6}&\colorb{9}       & \colora{7}&\colorb{9}               \\
1024 &   12   & \colora{1}&\colorb{13}   & \colora{4}&\colorb{14}   & \colora{5}&\colorb{14} &  \colora{6}&\colorb{14}    & \colora{7}&\colorb{14} & \colora{8}&\colorb{14} & \colora{10}&\colorb{14}      & \colora{14}&\colorb{13}                 \\
  2048 &  24   & \colora{2}&\colorb{15}   & \colora{5}&\colorb{15}   & \colora{6}&\colorb{15} &  \colora{7}&\colorb{15}    & \colora{8}&\colorb{16} & \colora{11}&\colorb{16} & \colora{15}&\colorb{16}          & \colora{22}&\colorb{16}             \\
\bottomrule
\end{tabular}
\end{small}
\end{table}

\section{On the sensitivity of DtN operators}\label{sec:DtN sensitivity}

The experiments from the previous section suggest that the DtN for the free surface boundary condition is much more sensitive to perturbations 
than for an absorbing boundary condition.
Performing a mode-by-mode analysis, see section \ref{ssec:Modal analysis}, confirms this and reveals that the issue is already observed in one dimension.
This leads to an analytical sensitivity analysis of scattering problems on the half line via Riccati equations in Section \ref{ssec:analysis Riccati}. Additional 
illustrations for this case are provided in Section \ref{ssec:Illustration 1D}.

\subsection{Modal analysis}\label{ssec:Modal analysis}

If in the study above in Section \ref{sssection:tensor product perturb} we replace the perturbation model
\eqref{eq:perturb} with 
only linear velocity perturbations, i.e. $v(r) = v_{\text{PREM}}(r) \left( 1 + \varepsilon   \right)$ with constant $\varepsilon \geq 0$,
then the perturbed problem is separable as well. Denote by  $\dtn_{j}(\psi^{\ell},\varepsilon)$ the DtN numbers as computed in Proposition \ref{prop: DtN on discrete eigenfunction} for perturbation $\varepsilon$. We fix the interface $j=J-1$ closest to the CMB. In Figure \ref{fig:numerical-DtN-error} the relative DtN error 
\begin{equation}\label{eq:numerical DtN error}
  \frac{ \vert  \dtn_{j}(\psi^{\ell},0) -  \dtn_{j}(\psi^{\ell},\varepsilon) \vert }{   \vert  \dtn_{j}(\psi^{\ell},0)   \vert }
\end{equation}
with $\varepsilon \approx  3.9 \cdot 10^{-5} $ is shown for $\omega \in  \{512,1024,2048\}$. 
The same mesh, in particular the same discrete modes, have been used for all $\omega$. \par 
The relative error on the guided modes $\vert \lambda_{\ell} \vert \lesssim \omega^2$ for the free surface boundary condition is highly oscillatory and 
even in the best case almost two orders of magnitude larger than for the PML boundary condition. 
It also grows linearly in $\omega$ and $\varepsilon$. 
The first statement can be seen in the Figure while the second was observed in other experiments not shown here. 
These findings show that the DtN for the free surface boundary condition is indeed highly sensitive to perturbations which explains 
the poor performance of the tensor product DtN based on the background model as observed in Section \ref{sssection:tensor product perturb}.

\begin{figure*}
\centering
  \includegraphics[width=\textwidth]{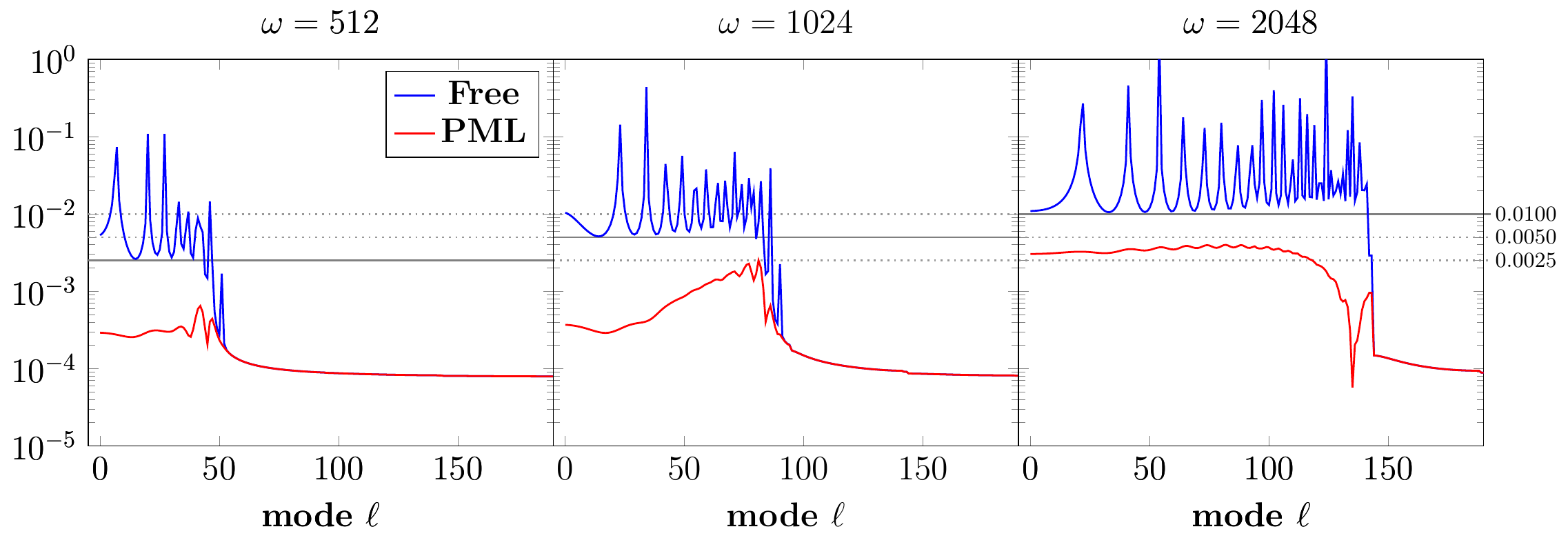}
\caption{ The relative DtN error on the discrete eigenmodes as computed in (\ref{eq:numerical DtN error}) for a linear velocity perturbation of fixed strength $\varepsilon$. The blue line is for a free surface boundary condition at the Earth's surface while the red line is obtained with an absorbing boundary condition implemented by a PML. The modes $\ell$ have been ordered by increasing magnitude of $\lambda_{\ell}$.  }
\label{fig:numerical-DtN-error}      
\end{figure*}

\subsection{Analysis via Riccati equations}\label{ssec:analysis Riccati}
The high sensitivity already occurs in one dimension as demonstrated by the modal analysis. 
Hence, to get to the core of the problem, we will analyze the one dimensional scattering problems with transparent boundary conditions 
\begin{subequations}
\begin{equation}\label{eq:scat_halfline_eps} \tag{\theequation-T}
\begin{cases}
-u\Tind'' - \omega^2(1+\varepsilon(x)) u\Tind = 0& \mbox{on } (0,a)\\
u\Tind(0)=1\\
u\Tind'(a) = i \omega u\Tind(a)
\end{cases}
\end{equation} 
and reflecting boundary conditions 
\begin{equation}\label{eq:scat_interv_eps} \tag{\theequation-R}
\begin{cases}
-u\Rind'' - \omega^2(1+\varepsilon(x)) u\Rind = 0& \mbox{on } (0,a)\\
u\Rind(0)=1\\
u\Rind(a) = 0
\end{cases}
\end{equation}
\end{subequations}
for $a>0$ and  $\Im  \omega \geq 0$. \par 
 
Let us consider the functions 
\[
v_{\mathrm{B}}(x):= \frac{u_{\mathrm{B}}'(x)}{u_{\mathrm{B}}(x)},
\qquad \mathrm{B}\in \{\mathrm{T},\mathrm{R}\},
\]
which may be interpreted as  ``local DtN numbers''. It is well-known and easy to check that these functions satisfy the Riccati equation
\begin{equation}\label{eq:riccati}
v'(x) = -\omega^2(1+\varepsilon(x)) - v(x)^2\,.
\end{equation} 
Moreover, we have the initial conditions
\begin{equation}
v_{\mathrm{T}}(a) = i \omega,\qquad v_{\mathrm{R}}(a)=\infty.
\end{equation}
For simplicity, let us switch to the perturbation $E(x):=\omega^2 \varepsilon(x)$. The Fr\'{e}chet derivatives
\[
k_{\mathrm{B}}:= \diffq{v_{\mathrm{B}}}{E}\tilde{E}
\]
(formally) satisfy the linear initial value problems 
\begin{equation}\label{eq:pert_ric}
k_{\mathrm{B}}' = - \tilde{E} -2 v_{\mathrm{B}} k_{\mathrm{B}},\qquad 
k_{\mathrm{B}}(a) = 0,
\end{equation}
which follows by differentiating (\ref{eq:riccati}) with respect to $E$ \footnote{As we only aim for a heuristic explanation rather then a theorem in this section, we do not go into the technicalities of justifying this statement rigorously in the presence of singularities of $v_B$.}.
Our aim is to compare the sizes of $k_{\mathrm{T}}(0)$ and $k_{\mathrm{R}}(0)$.

The solutions to the initial value problems \eqref{eq:pert_ric}
can be expressed in terms of the solutions
\[
k^y_{\mathrm{B}}(x):= \exp\left(\int_x^y 2v_{\mathrm{B}}(z)\,dz \right)
\]
to the homogeneous equations 
${k^y_{\mathrm{B}}}' =  -2 v_{\mathrm{B}} k_{\mathrm{B}}^y$ with initial conditions $k_{\mathrm{B}}^y(y)=1$ as 
\begin{equation}\label{eq:sol_w}
k_{\mathrm{B}}(x) = \int_x^a k^y_{\mathrm{B}}(x)\tilde{E}(y)\, dy\,.
\end{equation}
The main difference between transparent and reflecting boundary conditions is that 
$v_{\mathrm{T}}$ is typically close to purely imaginary (for $E=0$ it is 
identically $i \omega$) whereas $v_{\mathrm{R}}$ for real-valued $\omega$ and $E$ is real-valued and has singularities at zeros of $u_{\mathrm{R}}$. For $E=0$ we have 
$v_{\mathrm{R}}(x) = - \omega \cot(\omega(a-x))$. 
Therefore, $|k^y_{\mathrm{T}}|$ is close to 
$1$ whereas the kernel  $|k^y_{\mathrm{R}}|$ has singularities, and its modulus is often much larger than $1$. In view of \eqref{eq:sol_w}, 
this explains, why $|k_{\mathrm{R}}(0)|$ is typically 
much greater than $|k_{\mathrm{T}}(0)|$.  
This conclusion is consistent with the relative DtN error for the guided modes as shown in Figure \ref{fig:numerical-DtN-error}.
In contrast, $\vert \lambda_{\ell} \vert \gtrsim \omega^2$ leads to exponentially decaying modes and removes the singularities from the 
corresponding kernel.
In this case, $|k_{\mathrm{R}}(0)|$ and $|k_{\mathrm{T}}(0)|$ are of comparable size.

\subsection{Comparison of DtN numbers for constant perturbation}\label{ssec:Illustration 1D}

Let us illustrate the qualitative statements of the previous section by considering only 
constant perturbations $\varepsilon(x) = \varepsilon$ which allows us to compute the solutions to (\ref{eq:scat_halfline_eps}) and (\ref{eq:scat_interv_eps}). These problems become constant coefficient 
problems in the perturbed wavenumber $\omega_{\varepsilon} = \omega \sqrt{1+\varepsilon}$. The solutions for $\omega>0$ for
transparent boundary conditions
\begin{equation*}
u\Tind^{\varepsilon}(x) =  \frac{\sqrt{1+\varepsilon} \cos( \omega_{\varepsilon }(x-a) ) + i  \sin( \omega_{\varepsilon }(x-a) )   } { \sqrt{1+\varepsilon} \cos( \omega_{\varepsilon }a ) - i  \sin( \omega_{\varepsilon }a)  }
\end{equation*}
 and reflecting boundary conditions 
\begin{equation*}
u\Rind^{\varepsilon}(x) = \cos (\omega_{\varepsilon}x) - \cot(\omega_{\varepsilon}a) \sin( \omega_{\varepsilon}x)
\end{equation*} 
yield the perturbed DtN numbers 
\begin{align*}
\text{DtN}\Tind(\varepsilon):= & - (u\Tind^{\varepsilon})'(0) = - i \omega -  \omega \varepsilon \frac{ \sin(\omega_{\varepsilon } a) }{\sqrt{1+\varepsilon} \cos( \omega_{\varepsilon }a ) - i  \sin( \omega_{\varepsilon }a) },   \\ 
\dtn\Rind(\varepsilon):=& -(u\Rind^{\varepsilon})'(0) =  \omega_{\varepsilon} \cot(\omega_{\varepsilon}a). 
\end{align*}
Comparing this with the background profile gives the relative errors
\begin{align*}
  \Delta \Tind(\varepsilon,\omega) & :=  \vert \dtn \Tind(0)-\dtn \Tind(\varepsilon) \vert / \vert \dtn \Tind(0) \vert  \\
                                                                     & = \varepsilon  \frac{\vert \sin(\omega_{\varepsilon } a)  \vert  }{ \sqrt{ 1 + \varepsilon \cos( \omega_{\varepsilon } a)^2  }} \leq \varepsilon
\intertext{ and }   
  \Delta \Rind(\varepsilon,\omega) & :=  \vert \dtn\Rind(0)-\dtn\Rind(\varepsilon) \vert / \vert \dtn\Rind(0) \vert \\
  & = \frac{ \vert \cot(\omega a) - \sqrt{1+\varepsilon} \cot( \sqrt{1+\varepsilon} \omega a) \vert}{ \vert  \cot(\omega a)  \vert} = \varepsilon \left\vert -\frac{1}{2}+\frac{\omega a}{\sin(2\omega a)} \right\vert + O(\varepsilon^2). 
\end{align*}

In Figure \ref{fig:analytic-DtN-error} these functions are plotted for constant $\varepsilon =  10^{-3} $ and $a=1$. In this case, the relative error 
for the transparent boundary conditions can be bounded independently of $\omega$ while
the relative error for reflecting boundary conditions is much larger, highly oscillatory and grows like $\omega$. 

\begin{figure*}
\centering
  \includegraphics[width=.8\textwidth]{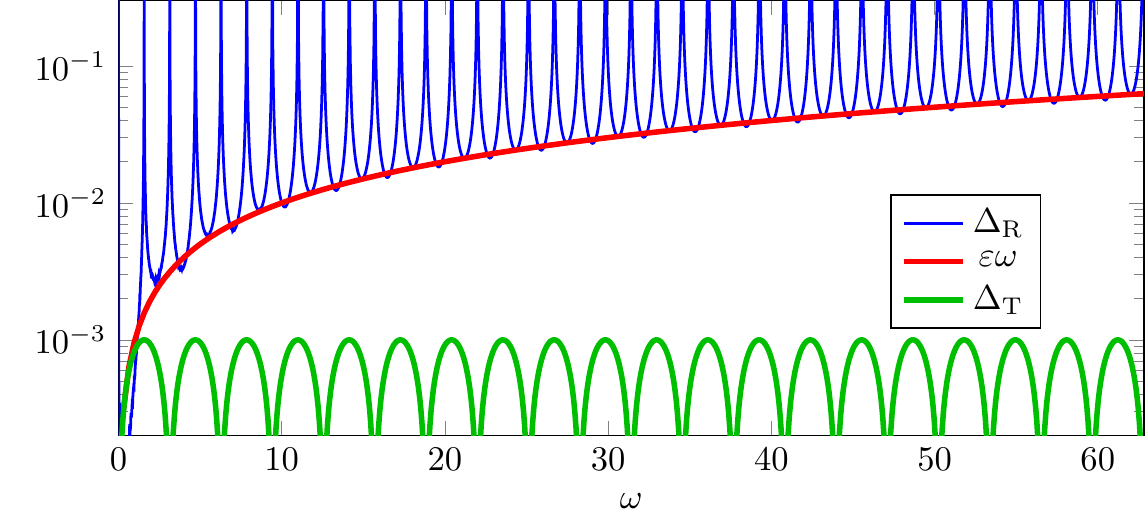}
\caption{ Comparison of analytic DtN errors for a constant perturbation $\varepsilon = 10^{-3} $ and $a=1$. Here, $\mathrm{R}$ and $\mathrm{T}$ denote reflecting
and transparent boundary conditions respectively.}
\label{fig:analytic-DtN-error}      
\end{figure*}

\section{Conclusion}\label{section: concl}

This paper investigates the potential of sweeping preconditioners for stratified media in absence of an absorbing boundary condition.
For such a problem the DtN cannot be reasonably approximated by a moving PML.
To resolve this issue, a tensor product discretization of the DtN, which is based on separability of the equation for the background model, has been introduced yielding a direct solver for the unperturbed background model.
Despite its perfect approximation of the DtN the applicability of the resulting sweeping preconditioner is limited due to a very high sensitivity of the DtN to perturbations. 
As a conclusion we can state that in the presence of reflections any sweeping preconditioner for wave propagation relying on an accurate, but not perfect, DtN approximation -- based on a tensor product structure or not -- is doomed to fail in practice.

%
%

\begin{acknowledgements}
  J. {Preu\ss}  is  a  member  of  the  International  Max  Planck Research School for Solar System Science at the University of G\"ottingen; \\[1ex]
  \textbf{Author contributions}
  T.H. initiated the research on sweeping preconditioners for problems with reflection. All authors designed and performed the research. Selection of the model and derivation of the discrete formulation of the SH-waves example is due to J.P.. All implementations and numerical computations were performed by J.P. using the finite element library NGSolve. J.P. drafted the paper and all authors contributed to the final manuscript.
\end{acknowledgements}

%
 \section*{Conflict of interest}
 The authors declare that they have no conflict of interest.

\appendix
\section{Derivation of the variational formulation}\label{sec:deriv}
In \cite{IW95} the equation for SH-waves in the time domain assuming axial symmetry is given as 
\begin{equation} \tag{A1}
\rho \frac{\partial^2 u }{\partial t^2} = f + \frac{1}{r^4} \frac{\partial}{\partial r }\left(  r^4 \mu \frac{\partial u}{\partial r} \right) + \frac{1}{r^2 \sin^3(\theta)} \frac{\partial}{\partial \theta} \left(  \sin^3(\theta) \mu \frac{\partial u}{\partial \theta} \right).
\end{equation}
Multiplying with $r^2 \sin^2(\theta)$ and transforming to the frequency domain we obtain 
\begin{equation} \tag{A2}
- \rho \omega^2 u r^2 \sin^2(\theta) - \frac{\sin^2(\theta) }{r^2} \frac{\partial}{\partial r} \left( r^4 \mu \frac{\partial u}{\partial r} \right) - \frac{1}{\sin(\theta)} \frac{\partial}{\partial \theta} \left(  \sin^3(\theta) \mu \frac{\partial u}{\partial \theta} \right) = f r^2 \sin^2(\theta).
\end{equation}
To obtain a variational formulation in spherical coordinates this equation has to be integrated taking the volume 
element $r ^2 \sin(\theta) dr d \theta d \varphi$ into account. Due to axisymmetry the integration over $\varphi$ only yields 
a constant factor, which can be omitted. We obtain the linear form: 
\begin{equation} \tag{A3}
f(v) =   \int\limits_{R_{ \text{CMB}}}^{R_{\oplus}}  \int\limits_{0}^{\pi} f(r,\theta) v r^4 \sin^3(\theta) ~ d \theta dr.
\end{equation}
To obtain the bilinear form we integrate by parts in $r$ and $\theta$. The boundary terms in $r$ vanish because of the free surface 
boundary condition. Since homogeneous Dirichlet boundary conditions are imposed at $\theta = 0$ and $\theta = \pi$ the boundary 
terms in $\theta$ vanish as well.
Let $H^1_0(\Omega)$ be the subspace of $H^1(\Omega)$ with homogeneous boundary values at $\theta = 0$ and $\theta = \pi$. The variational problem is: Find $u \in H^1_0(\Omega)$ so that $a(u,v) = f(v)$ for all $v \in H^1_0(\Omega)$ with 
\begin{align} 
a(u,v) & = \!\!\int\limits_{R_{ \text{CMB}}}^{R_{\oplus}} \!\!\! \int\limits_{0}^{\pi} \! \left( \!   - \rho \omega^2  u v r^4 \sin^3(\theta) - \frac{\partial}{\partial r} \! \left( r^4 \mu \frac{\partial u}{\partial r} \right) v \sin^3(\theta) - r^2  \frac{\partial}{\partial \theta} \left(  \sin^3(\theta) \mu \frac{\partial u}{\partial \theta} \right) \! v  \! \right)  \!d \theta dr  \nonumber \\  \tag{A4}
            & = \!\! \int\limits_{R_{ \text{CMB}}}^{R_{\oplus}} \!\!\!  \int\limits_{0}^{\pi} \! \left( \!   - \rho r^2 \omega^2  u v +  r^2 \mu \frac{\partial u}{\partial r}  \frac{\partial v}{\partial r} +   \mu \frac{\partial u}{\partial \theta} \frac{\partial v}{\partial \theta}    \right) r^2 \sin^3(\theta)  d \theta dr .
\end{align}

\bibliographystyle{spmpsci}      
\bibliography{references}   

\end{document}